\newtheorem{definition}{Definitions}[section]
\newtheorem{lemma}[definition]{Lemma}
\newtheorem{prop}[definition]{Proposition}
\newtheorem{theo}[definition]{Theorem}
\newtheorem{Coro}[definition]{Corollary}
\newtheorem{remark}[definition]{Remark}
\newcommand{\Ext}{{\rm Ext}}
\newcommand{\Tor}{{\rm Tor}}
\newcommand{\Hom}{{\rm Hom}}
\newcommand{\Spec}{{\rm Spec}}
\newcommand{\Supp}{{\rm Supp}}
\newcommand{\Max}{{\rm Max}}
\newcommand{\fp}{{\frak p}}
\newcommand{\fm}{{\frak m}}
\newcommand{\fa}{{\frak a}}
\newcommand{\fb}{{\frak b}}
\newcommand{\dlim}{{\displaystyle\lim_{\stackrel{\longrightarrow}{\scriptscriptstyle{n\in\Bbb{N}}}}}}
\begin{document}

\title[Cominimaxness with respect to ideals of dimension one]
{Cominimaxness with respect to ideals of dimension one}%

\author[Roshan]{Hajar Roshan-Shekalgourabi}%
\address{Department of Basic Sciences, Arak University of Technology, P. O. Box 38135-1177, Arak,  Iran.}%
\email{hrsmath@gmail.com and Roshan@arakut.ac.ir}%

\subjclass[2010]{13D45, 13E05, 18E10}%
\keywords{Minimax modules, Cominimax modules, Krull dimension, Local cohomology modules.}%

\date{\today}%
\begin{abstract}
Let $R$ be a commutative Noetherian ring, $\fa$ be an ideal of $R$ and $M$ be an $R$-module. It is shown that if $\Ext^i_R(R/\fa,M)$ is minimax for all $i\leq \dim M$, then the $R$-module $\Ext^i_R(N,M)$ is minimax for all $i\geq 0$ and for any finitely generated $R$-module $N$ with $\Supp_R(N) \subseteq V (\fa)$ and $\dim N \leq 1$. As a consequence of this result we obtain that for any $\fa$-torsion $R$-module $M$ that $\Ext^i_R(R/\fa, M)$ is minimax for all $i\leq \dim M$, all Bass numbers and all Betti numbers of $M$ are finite. This generalizes \cite[Corollary 2.7]{BNS2015}. Also, some equivalent conditions for the cominimaxness of local cohomology modules with respect to ideals of dimension at most one are given.
\end{abstract}
\maketitle

\section{Introduction}
Let $R$ denote a commutative Noetherian ring with identity and $\fa$ be an ideal of $R$. For an $R$-module $M$, the $i$th local cohomology module of $M$ with respect to $\fa$ is defined as $$H^i_\fa(M)\cong\dlim \Ext^i_R(R/\fa^n, M).$$ For more details about the local cohomology, we refer the reader to \cite{BS}.

In 1968, Grothendieck \cite{Gro69} conjectured that for any ideal $\fa$ of $R$ and any finitely generated $R$-module $M$, $\Hom_R \left(R/\fa, H^i_\fa (M)\right)$ is a finitely generated $R$-module for all $i$. One year later, by proving a counterexample, Hartshorne \cite{Ha} showed that the Grothendieck's conjecture is not true in general even $R$ is regular and introduced the class of cofinite modules with respect to an ideal. He defined an $R$-module $M$ to be \emph{$\fa$-cofinite} if $\Supp_R(M) \subseteq V(\fa)$ and $\Ext^j_R (R/\fa,M)$ is finitely generated for all $j$ and posed the following question:

\begin{itemize}
  \item For which rings $R$ and ideals $\fa$ is the module $H^i_\fa (M)$ $\fa$-cofinite for all $i$ and all finitely generated $R$-modules $M$?\label{Q1}
\end{itemize}

There are many papers that are devoted to study this question. For example, see \cite{Ha, Chir, DM, Y, BN}.

In \cite{Zo}, Z\"{o}schinger introduced the interesting class of minimax modules, modules containing some finitely generated submodule such that the  quotient module is Artinian. As a generalization of the concept of $\fa$-cofinite modules, the concept of $\fa$-cominimax modules was introduced in \cite{ANV}. An $R$-module $M$ is said to be  \emph{$\fa$-cominimax} if $\Supp_R(M) \subseteq V(\fa)$ and $\Ext^j_R (R/\fa,M)$ is minimax for all $j$. Since the concept of minimax modules is a natural generalization of the concept of finitely generated modules, many authors studied the minimaxness of local cohomology modules and answered the Hartshorn's question in the class of minimax modules (see \cite{AR, ARH, ANV, M2011}).

As an important problem in commutative algebra is determining when the Bass numbers and Betti numbers of a module is finite. In this direction, recently, Bahmanpour et al. in \cite{BNS2015} proved that for any $\fa$-torsion $R$-module $M$ that $\Ext^i_R(R/\fa, M)$ is finitely generated for all $i\leq \dim M$, all Bass numbers $\mu^j(\fp,M)$ and all Betti numbers $\beta_j(\fp,M)$ of $M$ are finite. In this paper, as a generalization of this result, we will prove that the assertion in this result holds when we replace "finitely generated" by "minimax". More precisely, we shall show that:

\begin{Coro}
Let $M$ be an $R$-module of dimension $n$ such that $\Supp_R(M)\subseteq V(\fa)$ and $\Ext^i_R(R/\fa,M)$ is minimax for all $i\leq n$. Then all Bass numbers $\mu_i (\fp,M)$ and all Betti numbers $\beta_i(\fp, M)$ of $M$ are finite.
\end{Coro}

Our main tools for proving this result is the following theorem which states some conditions for the cominimaxness of local cohomology modules with respect to ideals of dimension one.

\begin{theo}\label{In7}
Let $M$ be an $R$-module such that $\Ext^i_R(R/\fa,M)$ is minimax for all $i\leq \dim M$. Then the following assertions hold:
\begin{enumerate}
  \item The $R$-module $H^i_\fb(M)$ is $\fb$-cominimax for all $i\geq 0$ and for any ideal $\fb\subseteq \fa$ with $\dim R/\fb\leq1$.
  \item The $R$-module $\Ext^i_R(N,M)$ is minimax for all $i\geq 0$ and for any finitely generated $R$-module $N$ with $\Supp_R(N) \subseteq V (\fa)$ and $\dim N \leq 1$.
\end{enumerate}
\end{theo}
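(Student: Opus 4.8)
The plan is to establish (2) first, by induction on $\dim M$, and then to read (1) off from it by a spectral sequence argument. Two inputs are used throughout: that the minimax $R$-modules form a Serre subcategory (Z\"oschinger), so the minimax property passes to submodules, quotients, and extensions, hence through every term of a long exact $\Ext$-sequence in either variable; and the \emph{dimension-one criterion for cominimaxness} — for an ideal $\fc$ with $\dim R/\fc\le 1$ and an $R$-module $X$ with $\Supp_R(X)\subseteq V(\fc)$, $X$ is $\fc$-cominimax iff $\Hom_R(R/\fc,X)$ and $\Ext^1_R(R/\fc,X)$ are minimax, and, if $X$ is in addition $\fc$-torsion, iff $(0:_X\fc)$ is minimax. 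From these one gets the auxiliary fact that $\fc$-cominimaxness of $T$ forces $\Ext^i_R(L,T)$ to be minimax for every $i$ and every finitely generated $L$ with $\Supp_R(L)\subseteq V(\fc)$ (induct on $i$, resolving $L$ over $R/\fc^{t}$ and using that $\fc$-cominimaxness depends only on $\sqrt{\fc}$). Finally, by a prime filtration, (2) for a general $N$ reduces at once to the case $N=R/\fp$ with $\fp$ a prime containing $\fa$ and $\dim R/\fp\le 1$.

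Now induct on $n:=\dim M$. Write $0\to\Gamma_\fa(M)\to M\to\overline M\to 0$ with $\overline M$ the $\fa$-torsion-free quotient. Since $\Hom_R(R/\fa,\overline M)=0$, this gives $\Hom_R(R/\fa,\Gamma_\fa(M))\cong\Hom_R(R/\fa,M)$ and $\Ext^1_R(R/\fa,\Gamma_\fa(M))\hookrightarrow\Ext^1_R(R/\fa,M)$, both minimax, so the torsion case of the criterion makes $\Gamma_\fa(M)$ $\fa$-cominimax and hence $\Ext^i_R(R/\fp,\Gamma_\fa(M))$ minimax for all $i$ by the auxiliary fact. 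The same sequence shows $\overline M$ still satisfies ``$\Ext^i_R(R/\fa,\overline M)$ minimax for $i\le\dim\overline M\le n$''. Choose an $\fa$-filter-regular element $x\in\fa$ for $\overline M$; since $\Gamma_\fa(\overline M)=0$ it is $\overline M$-regular and $\dim(\overline M/x\overline M)<n$, and the $\Ext^\bullet_R(R/\fa,-)$-sequence of $0\to\overline M\xrightarrow{\,x\,}\overline M\to\overline M/x\overline M\to 0$ shows $\overline M/x\overline M$ inherits the hypothesis, so the inductive hypothesis applies to it. To transfer back, observe that for finitely generated $N$ supported in $V(\fa)$ a power of $x$ annihilates $N$, hence acts nilpotently on $\Ext^i_R(N,\overline M)$; the long exact sequence then makes $\Ext^i_R(N,\overline M)/x\Ext^i_R(N,\overline M)$ a submodule of the minimax module $\Ext^i_R(N,\overline M/x\overline M)$, and the $x$-power filtration forces $\Ext^i_R(N,\overline M)$ to be minimax. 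Combining the torsion and torsion-free parts yields (2).

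For (1): since $\fb\subseteq\fa$ one has $\dim R/\fa\le 1$, and as $H^i_\fb(M)$ is $\fb$-torsion the criterion reduces the claim to minimaxness of $\Hom_R(R/\fb,H^i_\fb(M))$, which I would obtain by induction on $i$ from the spectral sequence $E_2^{p,q}=\Ext^p_R(R/\fb,H^q_\fb(M))\Rightarrow\Ext^{p+q}_R(R/\fb,M)$: for $q<i$ the modules $H^q_\fb(M)$ are $\fb$-cominimax by the inductive hypothesis, so all $E_2^{p,q}$ with $q<i$ are minimax, $\Ext^i_R(R/\fb,M)$ is minimax by (2) (invoking how $\fb$ sits inside $\fa$), and $\Hom_R(R/\fb,H^i_\fb(M))=E_2^{0,i}$ is assembled by the spectral sequence from these finitely many minimax subquotients. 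The hard part, I expect, is the base of the induction on $\dim M$ — equivalently, the torsion case of the dimension-one criterion, i.e. a Melkersson-type argument showing that an $\fa$-torsion module whose $\fa$-socle is minimax and whose support has dimension at most one is $\fa$-cominimax — together with the (routine but delicate) bookkeeping that ``$\Ext^i$ minimax for $i\le\dim M$'' genuinely survives the splitting off of $\Gamma_\fa(M)$ and the reduction modulo a filter-regular element, and with the existence of $\fa$-filter-regular elements in the presence of infinitely many associated primes.
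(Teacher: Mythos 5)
Your plan inverts the paper's: the paper first proves assertion (1) by induction on the cohomological degree $i$ (using Melkersson's criterion that a $\fb$-torsion module with Artinian socle is Artinian, the spectral-sequence lemma of Abazari--Bahmanpour, and Irani's dimension-one criterion), and only then deduces (2) from (1) by the prime filtration together with Melkersson's result that a cominimax local cohomology tower forces all $\Ext^i_R(R/\fp_j,M)$ to be minimax. Your deduction of (1) from (2) via the spectral sequence $E_2^{p,q}=\Ext^p_R(R/\fb,H^q_\fb(M))\Rightarrow\Ext^{p+q}_R(R/\fb,M)$ is essentially sound (modulo the point below about needing $E_2^{1,i}$ as well as $E_2^{0,i}$, and modulo the typo in the statement: one needs $\fa\subseteq\fb$, as in Proposition 2.7, so that $V(\fb)\subseteq V(\fa)$). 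The problem is that your proof of (2) has a gap that your own last sentence correctly identifies but does not close: the existence of the element $x$. The module $\overline M=M/\Gamma_\fa(M)$ is not finitely generated, so $\Ass_R(\overline M)$ may be infinite; the hypothesis only controls $\Ass_R(M)\cap V(\fa)$ (via minimaxness of $\Hom_R(R/\fa,M)$), not $\Ass_R(\overline M)\setminus V(\fa)$, and prime avoidance over infinitely many primes fails. So neither a regular nor a filter-regular element of $\fa$ on $\overline M$ is available by any general principle, and the entire inductive step on $\dim M$ collapses without it. This is precisely the difficulty the paper's induction on $i$ is designed to avoid: no regular elements are ever chosen there.

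Two further steps are unjustified as written. First, you apply the dimension-$\le 1$ torsion criterion to conclude that $\Gamma_\fa(M)$ is $\fa$-cominimax, but that criterion (Lemma 2.6, from Irani) requires $\dim\Gamma_\fa(M)\le 1$, and no bound on $\dim R/\fa$ is assumed in the theorem; $\Gamma_\fa(M)$ can have dimension up to $\dim M$. This one is repairable: since the target of (2) only involves $\Ext^i_R(R/\fp,-)$ with $\dim R/\fp\le1$, you may first replace $\fa$ by $\fp$ using Lemma 2.2, after which $\Gamma_\fp(M)$ does have dimension $\le1$ (with the case $\dim M=0$, where $\Ext^1_R(R/\fp,M)$ is not covered by the hypothesis, handled by the zero-dimensional Melkersson criterion). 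Second, your claim that for a $\fc$-torsion $X$ of dimension $\le1$ cominimaxness follows from minimaxness of $(0:_X\fc)$ alone is not a known result and is what you lean on in part (1); the correct criterion needs both $\Hom_R(R/\fc,X)$ and $\Ext^1_R(R/\fc,X)$. This too is repairable inside your spectral-sequence argument, since $E_2^{1,i}$ is likewise a finite extension of minimax subquotients of $E_2^{p,q}$ with $q<i$ and of $\Ext^{i+1}_R(R/\fb,M)$ --- which is exactly the content of the paper's Lemma 2.4. But the regular-element issue is not cosmetic, and as it stands your proof of (2) does not go through.
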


The proof of Theorem \ref{In7} is given in Proposition \ref{n6} and Theorem \ref{n7}.

Throughout the paper, we assume that $R$ is a commutative Noetherian ring, $\fa$ is an ideal of $R$ and $V(\fa)$ is the set of all prime ideals of $R$ containing $\fa$. For any unexplained notation and terminology we refer the reader to \cite{Matsu}.

\section{MAIN RESULTS}

Recall that a class of $R$-modules is a \emph{Serre subcategory} of the category of $R$-modules when it is closed under taking submodules, quotients and extensions. For example, the classes of Noetherian modules, Artinian modules or minimax modules are Serre subcategories. As in standard notation, we let $\mathcal{S}$ stand for a Serre subcategory of the category of $R$-modules.
The following lemma which is needed in the sequel, immediately follows from the definition of $\Ext$ and $\Tor$ functors.

\begin{lemma}\label{pro1}
Let $M$ be a finitely generated $R$-module and $N\in \mathcal{S}$. Then $\Ext^i_
R(M,N)\in \mathcal{S}$ and $\Tor_i^R (M,N)\in \mathcal{S}$ for all $i\geq0$.
\end{lemma}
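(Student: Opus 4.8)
The plan is to reduce both statements to the case where the finitely generated module $M$ is free of finite rank, and then invoke the fact that a Serre subcategory is closed under finite direct sums and kernels/cokernels of its morphisms. First I would choose, since $R$ is Noetherian and $M$ is finitely generated, a free resolution $\cdots \to F_2 \to F_1 \to F_0 \to M \to 0$ in which every $F_i$ is a \emph{finitely generated} free $R$-module, say $F_i \cong R^{n_i}$. For the $\Tor$ statement, one computes $\Tor_i^R(M,N)$ as the $i$th homology of the complex $F_\bullet \otimes_R N$; each term $F_i \otimes_R N \cong N^{n_i}$ lies in $\mathcal{S}$ because $\mathcal{S}$, being closed under extensions, is closed under finite direct sums. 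Then $\Tor_i^R(M,N)$ is a subquotient of $N^{n_i}$ (a subobject of a quotient, namely $\Ker$ modulo $\im$), hence lies in $\mathcal{S}$ since the latter is closed under submodules and quotients.

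For the $\Ext$ statement I would argue dually but using the \emph{same} resolution $F_\bullet$: $\Ext^i_R(M,N)$ is the $i$th cohomology of the cochain complex $\Hom_R(F_\bullet, N)$, and $\Hom_R(F_i,N) \cong \Hom_R(R^{n_i},N) \cong N^{n_i} \in \mathcal{S}$ by the same closure-under-finite-direct-sums observation. Again $\Ext^i_R(M,N)$ is a subquotient of $N^{n_i}$, so it belongs to $\mathcal{S}$. This avoids needing an injective resolution of $N$ (which need not have terms in $\mathcal{S}$) and is the cleaner route.

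There is essentially no serious obstacle here; the only point requiring a word of care is that one must use a resolution by finitely generated frees, which is available precisely because $R$ is Noetherian and $M$ is finitely generated — this is where both hypotheses of the lemma are used. The remaining steps (finite direct sums of objects of $\mathcal{S}$ lie in $\mathcal{S}$; subquotients of objects of $\mathcal{S}$ lie in $\mathcal{S}$) are immediate from the definition of a Serre subcategory recalled just before the lemma, so the proof is a short formal verification rather than a computation.
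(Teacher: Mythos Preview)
Your argument is correct and is precisely the verification the paper has in mind: the paper gives no explicit proof but simply remarks that the lemma ``immediately follows from the definition of $\Ext$ and $\Tor$ functors,'' which amounts exactly to computing via a finitely generated free resolution of $M$ and using that $\mathcal{S}$ is closed under finite direct sums and subquotients. Your write-up just makes this implicit reasoning explicit.
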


\begin{lemma}\label{Gruson}
Suppose that $M$ is a finitely generated $R$-module and $N$ is an arbitrary $R$-module. Let for some $t\geq0$, $\Ext^i_R(M,N)\in \mathcal{S}$ for all $i\leq t$. Then $\Ext^i_R(L,N)\in \mathcal{S}$ for all $i\leq t$ and any finitely generated $R$-module $L$ with $\Supp_R(L)\subseteq\Supp_R(M)$.
\end{lemma}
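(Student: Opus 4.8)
The plan is to exploit Gruson's theorem, which says that if $\Supp_R(L)\subseteq\Supp_R(M)$ with $L$ finitely generated, then $L$ has a finite filtration whose successive quotients are each a homomorphic image of a finite direct sum of copies of $M$; equivalently, $L$ can be reached from $M$ in finitely many steps by taking finite direct sums, quotients, submodules, and extensions. So the first step is to reduce the statement to the case $L = M^{\oplus k}$ and then, via short exact sequences, to propagate the hypothesis along each step of the filtration. Since $\mathcal{S}$ is a Serre subcategory (closed under subobjects, quotients, and extensions), the class of modules $X$ for which $\Ext^i_R(X,N)\in\mathcal{S}$ for all $i\le t$ is \emph{not} obviously Serre — the long exact sequence only gives three-term exactness — so the degree bound $t$ must be handled with care.

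The key steps, in order, would be: (1) Observe directly that $\Ext^i_R(M^{\oplus k},N)\cong\Ext^i_R(M,N)^{\oplus k}\in\mathcal{S}$ for all $i\le t$, since $\mathcal{S}$ is closed under finite direct sums. (2) Given a short exact sequence $0\to A\to B\to C\to 0$ of finitely generated modules, inspect the long exact sequence $\cdots\to\Ext^i_R(C,N)\to\Ext^i_R(B,N)\to\Ext^i_R(A,N)\to\Ext^{i+1}_R(C,N)\to\cdots$; here, knowing the relevant Ext modules lie in $\mathcal{S}$ for two of the three modules in degrees up to $t$ (and, where needed, one extra degree) lets one conclude for the third in degrees up to $t$, using that $\mathcal{S}$ is closed under kernels, cokernels, and extensions of its members. (3) Combine (1) and (2) by induction on the length of the Gruson filtration of $L$: at each stage we have an exact sequence relating a quotient of $M^{\oplus k}$ to the previous filtration term, and we push the property "$\Ext^i(-,N)\in\mathcal{S}$ for $i\le t$" through it.

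The main obstacle I anticipate is bookkeeping the degree shift in the long exact sequence: when $L$ is a \emph{quotient} (or submodule) of a module we control, the connecting homomorphism raises the Ext degree, so naively one seems to need the hypothesis in degree $t+1$ as well. The way around this is to prove the statement first for quotients of $M^{\oplus k}$ by running the argument so that the "bad" term $\Ext^{t+1}$ only appears as a target of a map whose kernel is what we need, and that kernel is a quotient of something in $\mathcal{S}$ — alternatively, one strengthens the inductive hypothesis to track all finitely generated $L$ with support in $\Supp_R(M)$ simultaneously, so that the degree-$(t+1)$ Ext of auxiliary modules is supplied by the same inductive step applied one degree higher, or one simply notes it is never actually needed because the relevant map lands in a submodule we already know is in $\mathcal{S}$. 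Making this degree accounting precise — choosing the filtration and the order of the exact-sequence chases so that no genuine appeal to degree $t+1$ is required — is the real content of the proof; everything else is the formal Serre-subcategory closure properties recorded in Lemma \ref{pro1}.
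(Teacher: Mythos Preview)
The paper itself offers no proof here --- it simply cites \cite[Lemma~2.2]{AR} --- and your plan is exactly the standard argument behind that citation: Gruson's filtration theorem plus long-exact-sequence chasing inside the Serre subcategory $\mathcal{S}$. So at the level of strategy you are aligned with the intended proof.

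Your diagnosis of the obstacle, however, has the direction reversed, and your proposed fixes are correspondingly vague. When $Q=L_j/L_{j-1}$ is a quotient of $M^{\oplus k}$ with kernel $K$, the long exact sequence gives
\[
\Ext^{i-1}_R(K,N)\longrightarrow \Ext^i_R(Q,N)\longrightarrow \Ext^i_R(M^{\oplus k},N),
\]
so to place $\Ext^t_R(Q,N)$ in $\mathcal{S}$ you need $\Ext^{t-1}_R(K,N)\in\mathcal{S}$, not anything in degree $t+1$. The clean resolution is a \emph{double induction}: induct on $t$, with the inductive hypothesis being that the lemma already holds for $t-1$ and hence for \emph{every} finitely generated module supported in $\Supp_R(M)$ (in particular for $K$); then, for fixed $t$, induct on the length of the Gruson filtration of $L$ exactly as in your step~(3). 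You gesture at ``strengthening the inductive hypothesis to track all $L$ simultaneously,'' which is the right idea, but you should state explicitly that the outer induction is on $t$ and that it supplies $\Ext^{t-1}_R(K,N)\in\mathcal{S}$; once that is said, no appeal to degree $t+1$ is ever needed and the bookkeeping becomes routine.
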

\begin{proof}
See \cite[Lemma 2.2]{AR}.
\end{proof}

Let us mention some elementary properties of the minimax modules that we shall use.

\begin{remark}\label{pro}
The following statements hold:
\begin{enumerate}
  \item The class of minimax modules contains all finitely generated and all Artinian modules.\label{i}
  \item Let $0\rightarrow L\rightarrow M\rightarrow N\rightarrow0$ be an exact sequence of $R$-modules. Then $M$ is minimax if and only if $L$ and $N$ are both minimax (see \cite[Lemma~2.1]{BN08}). Thus any submodule and quotient of a minimax module is minimax.\label{ii}
  \item The set of associated primes of any minimax $R$-module is finite.\label{iii}
  \item Every zero-dimensional minimax $R$-module is Artinian.\label{iv}
  \item If $R$ is a vector space, then every minimax $R$-module is finite length.\label{v}
  \end{enumerate}
\end{remark}

\begin{lemma}\label{minimax}
Let $\fa$ be an ideal of $R$, $M$ be an $R$-module and $n$ be a non-negative integer such that $\Ext^n_R(R/\fa, M)$ (resp. $\Ext^{n+1}_R(R/\fa, M)$) is minimax. If $H^i_\fa(M)$ is $\fa$-cominimax for all $i<n$, then $\Hom_R(R/\fa, H^n_\fa(M))$ (resp. $\Ext^1_R(R/\fa, H^n_\fa(M))$) is minimax.
\end{lemma}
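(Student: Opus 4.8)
The plan is to run the standard spectral-sequence / long-exact-sequence bookkeeping on the functor $\Gamma_\fa$, exploiting the hypothesis that $H^i_\fa(M)$ is $\fa$-cominimax for $i<n$ to push the minimaxness of $\Ext^n_R(R/\fa,M)$ down onto $\Hom_R(R/\fa,H^n_\fa(M))$. Concretely, I would use the Grothendieck spectral sequence
$$E_2^{p,q}=\Ext^p_R\!\bigl(R/\fa,H^q_\fa(M)\bigr)\ \Longrightarrow\ \Ext^{p+q}_R(R/\fa,M),$$
which is available because every module in sight is $\fa$-torsion where it needs to be and $R/\fa$ is finitely generated. The term I care about is $E_2^{0,n}=\Hom_R(R/\fa,H^n_\fa(M))$.

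First I would record the relevant differentials: the incoming differentials into $E_r^{0,n}$ are all zero (they would come from $E_r^{-r,n+r-1}=0$), so $E_\infty^{0,n}=E_2^{0,n}/(\text{images of outgoing differentials})$, and the only outgoing differential that can be nonzero is $d_r\colon E_r^{0,n}\to E_r^{r,n-r+1}$ for $2\le r\le n+1$. Now for $r\le n$ the target $E_r^{r,n-r+1}$ is a subquotient of $E_2^{r,n-r+1}=\Ext^r_R(R/\fa,H^{n-r+1}_\fa(M))$, and since $n-r+1<n$, the module $H^{n-r+1}_\fa(M)$ is $\fa$-cominimax by hypothesis, so $E_2^{r,n-r+1}$ is minimax; hence every such target, and in particular the kernel of each $d_r$ on $E_r^{0,n}$ has minimax cokernel in $E_{r+1}^{0,n}$—wait, I need this the other way: I want to compare $E_2^{0,n}$ with $E_\infty^{0,n}$, so I track that at each stage $r$ the module $E_{r+1}^{0,n}=\ker d_r$ is a submodule of $E_r^{0,n}$ whose quotient embeds into the minimax module $E_r^{r,n-r+1}$ (for $r\le n$) or into $E_{n+1}^{n+1,0}\subseteq \Ext^{n+1}_R(R/\fa,M)$ which is not assumed minimax. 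So I should instead argue: if I only assume $\Ext^n_R(R/\fa,M)$ is minimax, then $E_\infty^{0,n}$ is a subquotient of it, hence minimax; and to get from $E_\infty^{0,n}$ back up to $E_2^{0,n}$ I must pass through the outgoing differentials $d_2,\dots,d_{n+1}$, whose targets are minimax for $r\le n$ but whose last target at $r=n+1$ lands in $\Ext^{n+1}_R(R/\fa,M)$. This is exactly why the hypothesis splits into two cases: for the $\Hom$-statement one also needs the image of $d_{n+1}\colon E_{n+1}^{0,n}\to E_{n+1}^{n+1,0}$ to be minimax, and $E_{n+1}^{n+1,0}$ is a subquotient of $\Ext^{n+1}_R(R/\fa,M)$—hence the (resp. $\Ext^{n+1}$) alternative. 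So the clean argument is: assuming $\Ext^n$ and $\Ext^{n+1}$ of $R/\fa$ against $M$ are both minimax gives, via $E_\infty^{0,n}$ minimax and all outgoing-differential targets minimax, that $E_2^{0,n}=\Hom_R(R/\fa,H^n_\fa(M))$ is minimax; but in fact $\Ext^{n+1}$ is needed only for the $\Ext^1$-conclusion, while for the $\Hom$-conclusion one checks that the single problematic differential $d_{n+1}$ out of $E_{n+1}^{0,n}$ can be absorbed using only $\Ext^n_R(R/\fa,M)$ together with the cominimaxness of the lower $H^i_\fa(M)$—this is the point to verify carefully.

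For the parenthetical statement about $\Ext^1_R(R/\fa,H^n_\fa(M))=E_2^{1,n}$, the same analysis applies one diagonal over: incoming differentials into $E_r^{1,n}$ come from $E_r^{1-r,n+r-1}=0$, outgoing differentials $d_r\colon E_r^{1,n}\to E_r^{1+r,n-r+1}$ have targets that are subquotients of $\Ext^{1+r}_R(R/\fa,H^{n-r+1}_\fa(M))$, minimax by cominimaxness of $H^{n-r+1}_\fa(M)$ for $r\le n$, with the last relevant case $r=n+1$ giving a target inside $\Ext^{n+2}$—but wait, one should re-index: the relevant $E_\infty^{1,n}$ is a subquotient of $\Ext^{n+1}_R(R/\fa,M)$, which is exactly the module assumed minimax in the parenthetical hypothesis, so $E_\infty^{1,n}$ is minimax, and climbing back up to $E_2^{1,n}$ passes through outgoing-differential targets that are all minimax (being $\Ext$'s of $R/\fa$ against cominimax $H^i_\fa(M)$ with $i<n$). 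Putting these together, $\Ext^1_R(R/\fa,H^n_\fa(M))$ is minimax.

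The main obstacle, and the step I would write out most carefully, is the bookkeeping of which differentials survive and which targets are controlled by the cominimaxness hypothesis versus by the $\Ext^n$ (resp. $\Ext^{n+1}$) hypothesis—in particular making sure the borderline differential $d_{n+1}$ (resp. the appropriate one in the shifted case) lands in a module that is minimax for the reason the hypothesis supplies, and not one that merely "should" be. An equivalent and perhaps cleaner route, which I would present if the spectral-sequence indexing gets unwieldy, is to induct on $n$ using the two exact sequences coming from $0\to\Gamma_\fa(M)\to M\to M/\Gamma_\fa(M)\to0$ and a reduced module $M'$ with $\Gamma_\fa(M')=0$, $H^i_\fa(M')\cong H^{i+1}_\fa(M)$ for $i\ge1$; the base case $n=0$ is just $\Hom_R(R/\fa,\Gamma_\fa(M))=\Hom_R(R/\fa,M)$, and the inductive step uses Remark \ref{pro}(\ref{ii}) repeatedly on the long exact sequence of $\Ext_R(R/\fa,-)$ together with the inductive hypothesis applied to $M'$.
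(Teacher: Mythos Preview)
The paper does not actually prove this lemma: it simply observes that minimax modules form a Serre subcategory and cites \cite[Lemma~2.3]{AbaB2015}. Your spectral-sequence argument is therefore a genuinely different (and self-contained) route, and it is essentially correct --- but there is one point of confusion that you should clean up, and once you do the argument becomes simpler than you think.

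You write that the outgoing differential $d_{n+1}\colon E_{n+1}^{0,n}\to E_{n+1}^{n+1,0}$ lands ``in $E_{n+1}^{n+1,0}\subseteq \Ext^{n+1}_R(R/\fa,M)$'', and you then worry that this target is not assumed minimax. This is a misidentification: $E_{n+1}^{n+1,0}$ is a subquotient of the \emph{$E_2$-page} term $E_2^{n+1,0}=\Ext^{n+1}_R(R/\fa,H^0_\fa(M))$, not of the abutment $\Ext^{n+1}_R(R/\fa,M)$. Since $0<n$, the module $H^0_\fa(M)=\Gamma_\fa(M)$ is $\fa$-cominimax by hypothesis, so $E_2^{n+1,0}$ is minimax just like the targets for $2\le r\le n$. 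In other words, \emph{every} outgoing differential from $E_r^{0,n}$ (for $2\le r\le n+1$) lands in a minimax module for the same reason --- cominimaxness of some $H^q_\fa(M)$ with $q<n$ --- and there is no ``problematic'' case at all. Then $E_\infty^{0,n}$ is a subquotient of the minimax module $\Ext^n_R(R/\fa,M)$, and climbing back to $E_2^{0,n}$ through finitely many minimax cokernels gives the $\Hom$-statement. The same correction applies to your $\Ext^1$ analysis: the target of $d_{n+1}\colon E_{n+1}^{1,n}\to E_{n+1}^{n+2,0}$ sits inside $\Ext^{n+2}_R(R/\fa,\Gamma_\fa(M))$, again minimax by cominimaxness of $\Gamma_\fa(M)$; the hypothesis on $\Ext^{n+1}_R(R/\fa,M)$ is used only to make $E_\infty^{1,n}$ minimax.

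Your alternative inductive route via $0\to\Gamma_\fa(M)\to M\to M/\Gamma_\fa(M)\to 0$ and an $\fa$-torsion-free replacement is also perfectly sound and is in fact closer in spirit to how the cited lemma in \cite{AbaB2015} is proved.
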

\begin{proof}
Since the class of minimax modules is a Serre subcategory, the assertion follows from \cite[Lemma~2.3]{AbaB2015}.
\end{proof}

The following lemma is well-known for $\fa$-cominimax modules.

\begin{lemma}\label{n1}
Let $M$ be an $\fa$-torsion $R$-module such that $\dim M\leq 1$. Then $M$ is $\fa$-cominimax if and only if the $R$-modules $\Hom_R(R/\fa,M)$ and $\Ext^1_R(R/\fa,M)$ are minimax.
\end{lemma}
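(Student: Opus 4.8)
The forward implication is immediate from the definition: if $M$ is $\fa$-cominimax, then $\Ext^j_R(R/\fa,M)$ is minimax for all $j\geq0$, in particular for $j=0$ and $j=1$. All the content is in the converse, which I would prove by reducing the dimension twice over.

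First, I would dispose of the case $\dim M\leq 0$. Then $\Supp_R M\subseteq\Max R$, so $\Hom_R(R/\fa,M)=(0:_M\fa)$ is a zero-dimensional minimax module and hence Artinian (Remark~\ref{pro}). Since $M$ is $\fa$-torsion and an $\fa$-torsion module with Artinian $\fa$-socle is itself Artinian (Melkersson's criterion), $M$ is Artinian; then $\Ext^j_R(R/\fa,M)$ is Artinian for every $j$ by Lemma~\ref{pro1} applied with $\mathcal{S}$ the Serre subcategory of Artinian modules, in particular minimax, so $M$ is $\fa$-cominimax.

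Next, I would reduce the general case to an ideal of dimension at most one. Since $\Hom_R(R/\fa,M)$ is minimax its set of associated primes is finite, and it coincides with $\Ass_R M$ because $M$ is $\fa$-torsion; hence $\Supp_R M=\bigcup_{\fp\in\Ass_R M}V(\fp)$ is closed, say $\Supp_R M=V(\fb)$ with $\fb=\bigcap_{\fp\in\Ass_R M}\fp\supseteq\fa$. The inclusion $\Supp_R M\subseteq V(\fb)$ forces $M$ to be $\fb$-torsion, while $\dim R/\fb=\dim M\leq 1$, and by Lemma~\ref{Gruson} applied to $R/\fb$ (whose support lies in $V(\fa)$) with $t=1$ the modules $\Hom_R(R/\fb,M)$ and $\Ext^1_R(R/\fb,M)$ are minimax; so it is enough to treat the case $\dim R/\fa\leq1$. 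There, let $L$ be the largest submodule of $M$ of dimension $\leq 0$. In $0\to L\to M\to M/L\to 0$ the module $L$ is $\fa$-torsion with $\Hom_R(R/\fa,L)\subseteq\Hom_R(R/\fa,M)$ minimax, so $L$ is $\fa$-cominimax by the first step, and chasing the long exact $\Ext_R(R/\fa,-)$-sequence shows both that $\Hom_R(R/\fa,M/L)$ and $\Ext^1_R(R/\fa,M/L)$ are minimax and that $M$ is $\fa$-cominimax if and only if $M/L$ is. Replacing $M$ by $M/L$ one reaches the situation in which every associated prime of $M$ is one-dimensional and $M$ embeds in its localization $S^{-1}M$, where $S=R\setminus\bigcup_{\fp\in\Ass_R M}\fp$, and $S^{-1}M$ is an Artinian module over the semilocal ring $S^{-1}R$ by the zero-dimensional step; from here the argument runs as in the finitely generated (cofinite) case — the minimax analogue, legitimate because minimax modules form a Serre subcategory, of the criterion of Delfino--Marley and Kawasaki, extended to arbitrary Noetherian rings by Bahmanpour--Naghipour.

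The step I expect to be the real obstacle is precisely this last one. The identity $\Ext^j_R(R/\fa,M)=\varinjlim_n\Ext^j_R(R/\fa,(0:_M\fa^n))$ only exhibits the target as a filtered colimit of minimax modules — each $(0:_M\fa^n)$ being minimax because the successive quotients $(0:_M\fa^{n+1})/(0:_M\fa^n)$ embed in $\Hom_R(\fa^n/\fa^{n+1},\Hom_R(R/\fa,M))$, which is minimax by Lemma~\ref{pro1} — and a filtered colimit of minimax modules need not be minimax. It is the one-dimensionality of $\Supp_R M$, which makes $M$ Artinian after localizing at its finitely many minimal primes, together with the hypothesis on $\Ext^1_R(R/\fa,M)$, that forces the colimit to stay inside the minimax class; making this precise is the crux, and it is exactly where the argument departs from the trivial direction.
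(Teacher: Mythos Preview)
The paper does not prove this lemma; it simply cites \cite[Proposition~2.4]{Irani}. So there is no in-paper argument to compare your sketch against, and your outline is in spirit the natural one --- reduce to dimension zero via Melkersson's Artinian criterion, then handle dimension one by adapting the cofinite argument of Bahmanpour--Naghipour to the minimax Serre subcategory. That said, as written the proposal has two genuine gaps.

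First, the reduction to $\dim R/\fa\le 1$ is not justified. You pass from $\fa$ to $\fb=\bigcap_{\fp\in\Ass_R M}\fp$ and assert ``it is enough to treat the case $\dim R/\fa\le 1$'', but establishing that $M$ is $\fb$-cominimax does not by itself yield that $M$ is $\fa$-cominimax: since $\fa\subseteq\fb$ one has $V(\fa)\supseteq V(\fb)$, so $\Supp_R(R/\fa)\not\subseteq V(\fb)$ in general, and neither Lemma~\ref{Gruson} nor a prime-filtration argument runs in that direction. Concretely, with $R=k[x,y]$, $\fb=(x)$ and $M=H^1_{(x)}(R)=R_x/R$, the module $M$ is $(x)$-cofinite with $\dim M=1$ but is not minimax, hence not $(0)$-cominimax; so $\fb$-cominimaxness alone cannot imply $\fa$-cominimaxness for $\fa\subsetneq\fb$. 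You must carry the original hypotheses on $\Ext^{\le 1}_R(R/\fa,M)$ through the argument rather than discard $\fa$.

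Second --- and you say this yourself --- the final step is left open. After reducing to the pure one-dimensional situation you write that ``the argument runs as in the finitely generated (cofinite) case'' and then immediately concede that the filtered-colimit obstruction is ``the real obstacle'' and that ``making this precise is the crux''. That crux is precisely the content of Irani's proposition, so what you have is a plausible roadmap together with an honest acknowledgement of where the work lies, not a proof. To make it one, you would need to show directly that the minimaxness of $\Hom_R(R/\fa,M)$ and $\Ext^1_R(R/\fa,M)$ forces the tower $(0:_M\fa)\subseteq(0:_M\fa^2)\subseteq\cdots$ to stabilise modulo an Artinian submodule, using the one-dimensionality of $\Supp_R M$ in an essential way.
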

\begin{proof}
See \cite[Proposition 2.4]{Irani}.
\end{proof}

The following proposition which states some conditions for the cominimaxness of local cohomology modules with respect to ideals of dimension at most one, plays a key role for proving the next theorem and the main result of this paper.

\begin{prop}\label{n6}
Let $M$ be an $R$-module of dimension $n$ such that $\Ext^j_R(R/\fa,M)$ is minimax for all $j\leq n$. Then the $R$-module $H^i_\fb(M)$ is $\fb$-cominimax for all $i\geq 0$ and for any ideal $\fa\subseteq \fb$ with $\dim R/\fb\leq1$.
\end{prop}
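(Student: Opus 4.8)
The plan is to induct on $n = \dim M$, using the hypothesis that $\operatorname{Ext}^j_R(R/\fa, M)$ is minimax for all $j \le n$, and exploiting the dimension-one restriction $\dim R/\fb \le 1$ on the ideal $\fb \supseteq \fa$. For the base case $n = 0$: then $M$ has dimension zero, and since $\operatorname{Hom}_R(R/\fa, M)$ is minimax and zero-dimensional, it is Artinian (Remark \ref{pro}(iv)); one then argues that $H^0_\fb(M) \subseteq M$ is itself (essentially) the $\fb$-torsion part, and since $\dim R/\fb \le 1$ one checks $\operatorname{Hom}_R(R/\fb, H^0_\fb(M))$ and $\operatorname{Ext}^1_R(R/\fb, H^0_\fb(M))$ are minimax via Lemma \ref{Gruson} (comparing supports $V(\fb) \supseteq \operatorname{Supp}_R(M)$ is not automatic, so the comparison must go through a finitely generated module whose support sits inside $V(\fa)$), and then apply Lemma \ref{n1}; the higher $H^i_\fb(M)$ vanish for $i > 1$ since $\operatorname{cd}(\fb, -) \le \dim R/\fb \le 1$ on modules supported in $V(\fb)$... actually more care is needed since $M$ need not be $\fb$-torsion, but $\dim M = 0$ forces $H^i_\fb(M) = 0$ for $i \ge 1$ anyway as $\operatorname{Supp} M$ is a finite set of maximal ideals.

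For the inductive step, the standard device is to pass to $\overline{M} = M / \Gamma_\fb(M)$. The short exact sequence $0 \to \Gamma_\fb(M) \to M \to \overline{M} \to 0$ gives $H^i_\fb(M) \cong H^i_\fb(\overline{M})$ for all $i \ge 1$, and $H^0_\fb(M) = \Gamma_\fb(M)$. First I would handle $\Gamma_\fb(M)$: it is $\fb$-torsion of dimension $\le n$, and one must show $\operatorname{Hom}_R(R/\fb, \Gamma_\fb(M))$ and $\operatorname{Ext}^1_R(R/\fb, \Gamma_\fb(M))$ are minimax — here I would use Lemma \ref{Gruson} again together with the hypothesis on $\operatorname{Ext}^j_R(R/\fa, M)$ for small $j$, plus the fact that $\Gamma_\fb(M) = \Gamma_\fa(\Gamma_\fb(M))$ is $\fa$-torsion too — then invoke Lemma \ref{n1}. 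For $\overline{M}$, the key point is that $\fb$ contains a nonzerodivisor $x$ on $\overline{M}$; the short exact sequence $0 \to \overline{M} \xrightarrow{x} \overline{M} \to \overline{M}/x\overline{M} \to 0$ has $\dim \overline{M}/x\overline{M} \le n - 1$, and one checks that $\operatorname{Ext}^j_R(R/\fa, \overline{M}/x\overline{M})$ remains minimax for $j \le n-1$ by chasing the long exact sequence (using that $\operatorname{Ext}^j_R(R/\fa, M)$ is minimax for $j \le n$ and comparing $M$ with $\overline{M}$ via $\Gamma_\fb(M)$, which is $\fa$-cofinite-type by the previous step). Applying the inductive hypothesis to $\overline{M}/x\overline{M}$ gives that $H^i_\fb(\overline{M}/x\overline{M})$ is $\fb$-cominimax for all $i$, and the long exact sequence of local cohomology, combined with the fact that multiplication by $x$ is eventually an isomorphism modulo $\fb$-cominimax pieces, propagates $\fb$-cominimaxness up to $H^i_\fb(\overline{M})$ for all $i$ — this is a now-classical argument (cf. the treatment of cofiniteness for ideals of dimension one).

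The main obstacle I anticipate is bookkeeping the hypothesis "$\operatorname{Ext}^j_R(R/\fa, -)$ minimax for $j \le \dim$" as one moves from $M$ to $\overline{M}$, to $\overline{M}/x\overline{M}$, and back: the dimension drops by one each time, so the range of $j$ for which the hypothesis is available shrinks correctly, but one must verify at each stage that the relevant $\operatorname{Ext}$ modules of the new module (with respect to $\fa$, not $\fb$) are still minimax, which requires knowing that the "error terms" ($\Gamma_\fb(M)$ and $\overline{M}/x\overline{M}$ in various degrees) have minimax $\operatorname{Ext}$'s against $R/\fa$ — and for that one repeatedly needs Lemma \ref{Gruson} to replace $R/\fa$ by other finitely generated modules supported in $V(\fa)$, together with Lemma \ref{minimax} to extract minimaxness of $\operatorname{Hom}_R(R/\fa, H^j_\fa(-))$ and $\operatorname{Ext}^1_R(R/\fa, H^j_\fa(-))$ along the way. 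Getting the induction to close cleanly — i.e. checking that the hypotheses genuinely persist in the reduced dimension — is where the real work lies; once that is in place, the local-cohomology long exact sequences and the $\fb$-torsion/$\fb$-torsion-free decomposition do the rest essentially formally.
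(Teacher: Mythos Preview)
Your plan is an induction on $n=\dim M$ via a nonzerodivisor reduction, whereas the paper inducts on the cohomological index $i$ with $M$ held fixed. The serious gap in your outline is the clause ``$\fb$ contains a nonzerodivisor $x$ on $\overline{M}$''. All you know is $\Gamma_\fb(\overline{M})=0$, i.e.\ no associated prime of $\overline{M}$ contains $\fb$; but prime avoidance needs $\Ass_R\overline{M}$ to be finite, and $M$ is \emph{not} assumed finitely generated --- the whole point of the proposition is to treat an arbitrary module satisfying only an Ext-minimaxness condition. Without such an element the short exact sequence $0\to\overline{M}\xrightarrow{x}\overline{M}\to\overline{M}/x\overline{M}\to 0$ is unavailable and the dimension cannot be lowered. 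A second, smaller issue: you assert that $\Gamma_\fb(M)$ is ``$\fa$-cofinite-type by the previous step'', but Lemma~\ref{n1} only yields $\fb$-cominimaxness, and since $V(\fa)\supseteq V(\fb)$ rather than the reverse, Lemma~\ref{Gruson} does not convert this into control of $\Ext^j_R(R/\fa,\Gamma_\fb(M))$; so your bookkeeping of the hypothesis in terms of $\fa$ (rather than $\fb$) does not close.

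The paper avoids both problems. After one application of Lemma~\ref{Gruson} to replace the hypothesis by minimaxness of $\Ext^j_R(R/\fb,M)$ for $j\le n$, it inducts on $i$: assuming $H^0_\fb(M),\dots,H^{i-1}_\fb(M)$ are already $\fb$-cominimax, Lemma~\ref{minimax} shows that $\Hom_R(R/\fb,H^i_\fb(M))$ and $\Ext^1_R(R/\fb,H^i_\fb(M))$ are minimax, and then Lemma~\ref{n1} (when $\dim R/\fb=1$) or Melkersson's Artinianness criterion \cite[Theorem~1.3]{Mel2} (when $\dim R/\fb=0$) gives $\fb$-cominimaxness of $H^i_\fb(M)$ directly. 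The top degree $i=n$ in the $\dim R/\fb=1$ case is handled by observing that $\Supp_R H^n_\fb(M)\subseteq\Max(R)$, so $H^n_\fb(M)$ is Artinian. No regular element, no passage to $\overline{M}/x\overline{M}$, and no propagation of the $\Ext^j_R(R/\fa,-)$ hypothesis through a dimension induction are needed.
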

\begin{proof}
By Grothendieck's Vanishing Theorem we only need to prove the assertion for $0\leq i \leq n$.
Let $\fb$ be an arbitrary ideal of $R$ containing $\fa$ with $\dim R/\fb\leq 1$. Then by assumption and Lemma \ref{Gruson}, $\Ext^j_R(R/\fb,M)$ is a minimax $R$-module for all $j\leq n$.
We first prove the assertion for the case $n=0$. Then by assumption, the $R$-module $$\Hom_R(R/\fb, \Gamma_\fb(M))=\Hom_R(R/\fb, M)$$ is minimax and so is Artinian by Remark \ref{pro}(\ref{iv}). Hence, $\Gamma_\fb(M)$ is Artinian by virtue of Melkersson's result \cite[Theorem 1.3]{Mel2}. Now, the assertion follows from the Grothendieck's Vanishing Theorem and the fact that $\Supp_R(\Gamma_\fb(M))\subseteq V(\fb)$ and the class of minimax modules contains all Artinian modules. Thus, it remains to give the proof for the case $n>0$. For this purpose, there are two cases to consider: $\dim R/\fb=0$ or $\dim R/\fb=1$.

\textbf{Case 1:} If $\dim R/\fb=0$, then in the light of assumption and Remark \ref{pro}(\ref{iv}), $\Hom_R(R/\fb, M)$ is Artinian. Hence by the argument in the case of $n=0$, we may conclude that $\Gamma_\fb(M)$ is $\fb$-cominimax. Now suppose, inductively, that $0 < i \leq n$ and the $R$-modules $$H^0_\fb(M), H^1_\fb(M), \cdots, H^{i-1}_\fb(M)$$ are $\fb$-cominimax. Since $\Supp_R(H^i_\fb(M))\subseteq V(\fb)$ and the $R$-module $\Ext^j_R(R/\fb, M)$ is minimax for all $j \leq n$, we infer from Lemma \ref{minimax} that $\Hom_R(R/\fb, H^i_\fb(M))$ is a zero-dimensional minimax $R$-module and so is Artinian. Hence, $H^i_\fb(M)$ is Artinian by \cite[Theorem 1.3]{Mel2}, as desired.

\textbf{Case 2:} Let $\dim R/\fb=1$. The proof is by induction on $0 \leq i < n$. Since $\Hom_R(R/\fb, M/\Gamma_\fb(M))=0$, it follows from the assumption and the exact sequence
\begin{align*}
  &0 \rightarrow \Hom_R(R/\fb, \Gamma_\fb(M))\rightarrow \Hom_R(R/\fb,M)\rightarrow \Hom_R(R/\fb, M/\Gamma_\fb(M)) \\ & \rightarrow \Ext^1_R(R/\fb, \Gamma_\fb(M))\rightarrow \Ext^1_R(R/\fb,M)
\end{align*}
that  the $R$-modules $\Hom_R(R/\fb, \Gamma_\fb(M))$ and $\Ext^1_R(R/\fb, \Gamma_\fb(M))$ are minimax. Hence, as $\dim \Gamma_\fb(M)\leq 1$, the $R$-module  $\Gamma_\fb(M)$ is $\fb$-cominimax by Lemma \ref{n1}.
Now suppose that the assertion holds for $i-1$; we will prove it for $i$. By the inductive hypotheses, the $R$-modules $$H^0_\fb(M), H^1_\fb(M), \cdots, H^{i-1}_\fb(M)$$ are $\fb$-cominimax. Since the $R$-modules $\Ext^i_R(R/\fb,M)$ and $\Ext^{i+1}_R(R/\fb,M)$ are minimax, it follows from Lemma \ref{minimax} that the $R$-modules $$\Hom_R(R/\fb, H^i_\fb(M)\ \text{and}\  \Ext^1_R(R/\fb, H^i_\fb(M))$$  are minimax and so in view of Lemma \ref{n1} the $R$-module $H^i_\fb(M)$ is $\fb$-cominimax, for all $i = 0, 1, \cdots, n-1$.
Since $\Ext^n_R(R/\fb, M)$ is minimax, $\Hom_R(R/\fb, H^n_\fb(M))$ is also minimax by Lemma \ref{minimax}. If there exists $\fp \in \Supp_R(H^n_\fb(M))\subseteq V(\fb)$ with $\dim R/\fp=1$, then it is easy to see that $\dim M_\fp\leq n-1$ and so $(H^n_\fb(M))_\fp=0$ by Grothendieck's Vanishing Theorem, a contradiction.
Therefore, $$\Supp_R(H^n_\fb(M))\subseteq \Max(R).$$ This implies that the $R$-module $\Hom_R(R/\fb, H^n_\fb(M))$ is Artinian by Remark \ref{pro}(\ref{iv}). Hence, $H^n_\fb(M)$ is Artinian by \cite[Theorem 1.3]{Mel2} and so is $\fb$-cominimax, as required.
\end{proof}

\begin{theo}\label{n7}
Let $M$ be an $R$-module of dimension $n$ such that $\Ext^i_R(R/\fa,M)$ is minimax for all $i\leq n$. Then the $R$-module $\Ext^i_R(N,M)$ is minimax for all $i\geq 0$ and for any finitely generated $R$-module $N$ with $\Supp_R(N) \subseteq V(\fa)$ and $\dim N \leq 1$.
\end{theo}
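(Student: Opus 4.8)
The strategy is to reduce the general finitely generated module $N$ with $\Supp_R(N)\subseteq V(\fa)$ and $\dim N\le 1$ to the case $N=R/\fb$ for a suitable ideal $\fb$, and then to feed the cominimaxness already obtained in Proposition \ref{n6} into a spectral sequence (or a dimension-shifting) argument. First I would choose $\fb=\Ann_R(N)$, so that $V(\fb)=\Supp_R(N)\subseteq V(\fa)$ and $\dim R/\fb=\dim N\le 1$; note $\fa\subseteq\sqrt{\fb}$, and since $H^i_\fb(M)=H^i_{\sqrt\fb}(M)$ we may as well assume $\fa\subseteq\fb$, which is exactly the hypothesis under which Proposition \ref{n6} applies. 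By Proposition \ref{n6}, $H^i_\fb(M)$ is $\fb$-cominimax for every $i\ge 0$; in particular $\Ext^p_R(R/\fb,H^q_\fb(M))$ is minimax for all $p,q\ge 0$, because $\Supp_R(N)\subseteq V(\fb)$ forces (via Lemma \ref{Gruson} applied with $\mathcal S=$ minimax modules) that $\Ext^p_R(N,H^q_\fb(M))$ is minimax for all $p$ once $\Ext^p_R(R/\fb,H^q_\fb(M))$ is.

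The key tool is the Grothendieck spectral sequence for the composite functor $\Hom_R(N,\Gamma_\fb(-))=\Hom_R(N,-)$ (the equality uses $\Supp_R(N)\subseteq V(\fb)$, so every homomorphism from $N$ lands in the $\fb$-torsion part):
\begin{equation*}
E_2^{p,q}=\Ext^p_R\bigl(N,H^q_\fb(M)\bigr)\ \Longrightarrow\ \Ext^{p+q}_R(N,M).
\end{equation*}
Every $E_2^{p,q}$ is minimax by the previous paragraph, hence so is every subquotient $E_\infty^{p,q}$ (minimax modules form a Serre subcategory, Remark \ref{pro}(\ref{ii})). For fixed $i$, the module $\Ext^i_R(N,M)$ has a finite filtration whose successive quotients are the $E_\infty^{p,q}$ with $p+q=i$, so $\Ext^i_R(N,M)$ is minimax by finitely many applications of the Serre property. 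This gives the conclusion for all $i\ge 0$.

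The main obstacle is confirming that the functor identity $\Hom_R(N,-)=\Hom_R(N,\Gamma_\fb(-))$ holds and that the resulting Grothendieck spectral sequence is legitimately available here; this is where the hypothesis $\dim N\le 1$ and $\Supp_R(N)\subseteq V(\fa)$ really enter, the former only through Proposition \ref{n6} (which needs $\dim R/\fb\le 1$ to produce the cominimaxness), the latter to identify the composite functor. An alternative, spectral-sequence-free route — which I would keep in reserve in case one wants to avoid invoking the Grothendieck spectral sequence — is to first establish the case $N=R/\fb$ directly: $\Ext^i_R(R/\fb,M)$ is minimax for $i\le n$ by Lemma \ref{Gruson}, and for $i>n$ one dimension-shifts along a free resolution of $R/\fb$ and uses that $H^j_\fb(M)$ is $\fb$-cominimax together with the standard long exact sequences relating $\Ext^i_R(R/\fb,M)$ to $\Ext^\bullet_R(R/\fb,H^\bullet_\fb(M))$; then one passes from $R/\fb$ to arbitrary $N$ with $\Supp_R(N)\subseteq V(\fb)$ by Lemma \ref{Gruson} once more (now with no bound on the cohomological degree, using that the $\Ext$'s against $R/\fb$ are minimax in all degrees). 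Either way, the real content has already been packaged into Proposition \ref{n6}, and Theorem \ref{n7} is a formal consequence.
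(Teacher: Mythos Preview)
Your spectral-sequence argument is correct, but the paper takes a different, more elementary route. Instead of working with the single ideal $\fb=\Ann_R(N)$, the paper takes a prime filtration $0=N_0\subset N_1\subset\cdots\subset N_t=N$ with $N_j/N_{j-1}\cong R/\fp_j$; each $\fp_j$ lies in $\Supp_R(N)\subseteq V(\fa)$, so $\fa\subseteq\fp_j$ and $\dim R/\fp_j\le 1$, and Proposition \ref{n6} gives that $H^i_{\fp_j}(M)$ is $\fp_j$-cominimax for all $i$. The passage from this to ``$\Ext^i_R(R/\fp_j,M)$ is minimax for all $i$'' is then delegated to \cite[Corollary~3.10]{Mel}, after which the long exact sequences coming from the filtration finish the job. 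So the paper's proof is essentially the prime-filtration variant of your ``reserve'' approach, with the dimension-shifting/spectral-sequence work hidden inside the Melkersson citation.

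What your approach buys is self-containment: you make the convergence argument explicit via the Grothendieck spectral sequence $E_2^{p,q}=\Ext^p_R(N,H^q_\fb(M))\Rightarrow\Ext^{p+q}_R(N,M)$ and need only one application of Proposition \ref{n6}, at the cost of invoking heavier machinery and the small $\sqrt{\fb}$ adjustment to ensure $\fa\subseteq\fb$. The paper's approach avoids spectral sequences on the page and sidesteps the radical issue entirely (primes in $V(\fa)$ automatically contain $\fa$), but leans on an external reference for the step that actually carries the weight. Both are valid; the paper's is shorter to write, yours is more transparent about where the cominimaxness of the $E_2$-page is really used.
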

\begin{proof}
Let $N$ be a finitely generated $R$-module such that $\Supp_R(N) \subseteq V (\fa)$
and $\dim N \leq 1$. Then, using \cite[Theorem 6.4]{Matsu}, there exist prime ideals $\fp_1, \cdots, \fp_t$ of $R$ and a chain $0=N_0\subseteq N_1\subseteq\cdots\subseteq N_t=N$ of submodules of $N$ such that $N_j/N_{j-1}\cong R/\fp_j$ for all $j = 1, \cdots,t$. Since $\fp_j \in \Supp_R(N)$, we deduce that $\dim R/\fp_j \leq 1$ and so in the light of Proposition \ref{n6}, the $R$-module $H^i_{\fp_j}(M)$ is $\fp_j$-cominimax for all $i \geq 0$ and for each $j = 1, \cdots, t$. Thus, by \cite[Corollary~3.10]{Mel}, the $R$-module $\Ext^i_R(R/\fp_j,M)$ is minimax for all $i \geq 0$ and for each $j = 1, \cdots, t$. Now, considering the exact sequences
\begin{align*}
 0\rightarrow N_1 \rightarrow &N_2 \rightarrow R/\fp_2 \rightarrow 0\\
   0\rightarrow N_2 \rightarrow &N_3 \rightarrow R/\fp_3 \rightarrow 0 \\
    &\vdots \\
     0\rightarrow N_{t-1} \rightarrow &N_t \rightarrow R/\fp_t \rightarrow 0
\end{align*}
we infer that $\Ext^i_R(N, M)$ is minimax, as desired.
\end{proof}

Now we are ready to state the main result of this paper.
Recall that for each $R$-module $M$, all integers $j \geq 0$ and all prime ideals $\fp$ of $R$, the $j$th \emph{Bass number} of $M$ with respect to $\fp$ is defined as $\mu^j(\fp,M) = \dim_{k(\fp)} \Ext^j_{R_\fp}
(k(\fp),M_\fp)$ and the $j$th \emph{Betti number} of $M$ with respect to $\fp$ is defined as $\beta_j(\fp,M) = \dim_{k(\fp)} \Tor_j^{R_\fp}(k(\fp),M_\fp)$, where $k(\fp) := R_\fp/{\fp R_\fp}$. Recently, Bahmanpour et al. in \cite[Corollary 2.7]{BNS2015} proved that for any $\fa$-torsion $R$-module $M$ that $\Ext^i_R(R/\fa, M)$ is finitely generated for all $i\leq \dim M$, all Bass numbers $\mu^j(\fp,M)$ and all Betti numbers $\beta_j(\fp,M)$ of $M$ are finite.
As an immediate consequence of Theorem \ref{n7} we obtain the next corollary which is a generalization of \cite[Theorem 1.9]{Mel1999} and \cite[Corollary~2.7]{BNS2015} and shows that the assertion in \cite[Corollary 2.7]{BNS2015} holds when we replace "finitely generated" by "minimax".

\begin{Coro}
Let $M$ be an $R$-module of dimension $n$ such that $\Supp_R(M)\subseteq V(\fa)$ and $\Ext^i_R(R/\fa,M)$ is minimax for all $i\leq n$. Then all Bass numbers $\mu_i (\fp,M)$ and all Betti numbers $\beta_i(\fp, M)$ of $M$ are finite.
\end{Coro}
\begin{proof}
Let $\fp\in\Spec(R)$ and $k(\fp) = R_\fp/\fp R_\fp$ be the residue field of $R_\fp$. If $\fp \notin V(\fa)$, then $M_\fp=0$ by assumption and there is nothing to prove. Otherwise, using Theorem \ref{n7} and letting $N:=R_\fp/\fp R_\fp$, we conclude that the $R_\fp$-module $\Ext^i_{R_\fp}(k(\fp),M_\fp)$ is minimax for all $i\geq 0$. Since $\Ext^i_{R_\fp}(k(\fp),M_\fp)$ is also a $k(\fp)$-vector space, it must be of finite length by Remark \ref{pro}(\ref{v}).  Now the proof is completed by \cite[Theorem 2.1]{Mel}.
\end{proof}

Consequently, we get the following equivalent conditions for the cominimaxness of local cohomology modules with respect to ideals of dimension at most one.

\begin{Coro}\label{cf}
Let  $M$ an $R$-module of dimension $n$ and $\fa$ be an ideal of $R$ such that $\dim R/\fa \leq 1$. Then the following conditions are equivalent:

\begin{enumerate}
  \item $\Ext^i_R(R/\fa,M)$ is minimax for all $i\leq \dim M$;
  \item $H^i_\fa(M)$ is $\fa$-cominimax for all $i$;
  \item $\Ext^i_R(R/\fa,M)$ is minimax for all $i$;
  \item $\Ext^i_R(X,M)$ is minimax for all $i\leq \dim M$ and for any finitely generated
$R$-module $X$ with $\Supp_R(X)\subseteq V (\fa)$;
  \item $\Ext^i_R(X,M)$ is minimax for all $i\leq \dim M$ and for any finitely generated
$R$-module $X$ with $\Supp_R(X)= V (\fa)$;
  \item $\Ext^i_R(X,M)$ is minimax for all $i$ and for any finitely generated
$R$-module $X$ with $\Supp_R(X)\subseteq V (\fa)$;
  \item $\Ext^i_R(X,M)$ is minimax for all $i$ and for any finitely generated
$R$-module $X$ with $\Supp_R(X)= V (\fa)$.
\end{enumerate}
\end{Coro}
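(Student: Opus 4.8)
The plan is to prove the chain of implications
$(1)\Rightarrow(2)\Rightarrow(3)\Rightarrow(1)$,
$(1)\Rightarrow(6)\Rightarrow(7)\Rightarrow(5)\Rightarrow(1)$,
and $(6)\Rightarrow(4)\Rightarrow(5)$, so that all seven conditions become equivalent. The core of the argument is just Theorem~\ref{n7} (together with Proposition~\ref{n6}), and the rest is bookkeeping with the Melkersson-type results already cited. Note first that under the standing hypothesis $\dim R/\fa\le 1$ every finitely generated $R$-module $X$ with $\Supp_R(X)\subseteq V(\fa)$ automatically satisfies $\dim X\le 1$, which is exactly the shape of module that Theorem~\ref{n7} handles.

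First I would dispatch $(1)\Rightarrow(2)$: assuming (1), Proposition~\ref{n6} applied with $\fb=\fa$ (which is legitimate since $\dim R/\fa\le 1$) gives that $H^i_\fa(M)$ is $\fa$-cominimax for all $i\ge 0$, which is (2). Next, $(2)\Rightarrow(6)$: if $H^i_\fa(M)$ is $\fa$-cominimax for all $i$, then by \cite[Corollary~3.10]{Mel} the module $\Ext^i_R(R/\fa,M)$ is minimax for all $i$; now for an arbitrary finitely generated $X$ with $\Supp_R(X)\subseteq V(\fa)$ one applies Lemma~\ref{Gruson} (with the role of $M$ there played by $R/\fa$ and $t=\infty$, i.e.\ for every $t$) to conclude $\Ext^i_R(X,M)$ is minimax for all $i$, giving (6). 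The implications $(6)\Rightarrow(7)$, $(7)\Rightarrow(5)$, $(6)\Rightarrow(3)$ (take $X=R/\fa$), $(3)\Rightarrow(1)$, $(6)\Rightarrow(4)$, and $(4)\Rightarrow(5)$ are all trivial weakenings of hypotheses, except that $(7)\Rightarrow(5)$ and $(4)\Rightarrow(5)$ require exhibiting at least one $X$ with $\Supp_R(X)=V(\fa)$, for which $X=R/\fa$ serves.

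The only implication with real content besides $(1)\Rightarrow(2)$ is closing the loop, say $(5)\Rightarrow(1)$ (or $(4)\Rightarrow(1)$): taking $X=R/\fa$, which is finitely generated with $\Supp_R(R/\fa)=V(\fa)$, condition (5) specializes to $\Ext^i_R(R/\fa,M)$ minimax for all $i\le\dim M$, which is precisely (1). Thus the genuinely substantive steps are $(1)\Rightarrow(2)$, powered by Proposition~\ref{n6}, and $(2)\Rightarrow(6)$, powered by \cite[Corollary~3.10]{Mel} followed by Lemma~\ref{Gruson}; everything else is either trivial or a direct quotation. I expect the main (minor) obstacle to be verifying that Lemma~\ref{Gruson} can be applied for all $t$ simultaneously in the step $(2)\Rightarrow(6)$: one simply notes that the hypothesis "$\Ext^i_R(R/\fa,M)\in\mathcal S$ for all $i\le t$" holds for every $t$ once it holds for all $i$, so the conclusion "$\Ext^i_R(X,M)\in\mathcal S$ for all $i\le t$" also holds for every $t$, hence for all $i$.
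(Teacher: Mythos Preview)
Your argument is correct and follows essentially the same route as the paper, which simply records that everything follows from Proposition~\ref{n6}, Theorem~\ref{n7} and Lemma~\ref{Gruson}. The only cosmetic difference is that where the paper would invoke Theorem~\ref{n7} to obtain $(1)\Rightarrow(6)$ in one step (any finitely generated $X$ with $\Supp_R(X)\subseteq V(\fa)$ has $\dim X\le 1$ since $\dim R/\fa\le 1$), you instead pass through $(2)$ and use \cite[Corollary~3.10]{Mel} together with Lemma~\ref{Gruson}; but this is exactly the machinery inside the proof of Theorem~\ref{n7}, so the content is the same. One tiny slip in your commentary: the implications $(7)\Rightarrow(5)$ and $(4)\Rightarrow(5)$ are pure weakenings (restricting the range of $i$, respectively the class of $X$) and do not require exhibiting a specific $X$; the place where you actually need $X=R/\fa$ with $\Supp_R(R/\fa)=V(\fa)$ is in $(5)\Rightarrow(1)$ (and $(6)\Rightarrow(3)$), which you handle correctly.
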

\begin{proof}
The assertions follow from Proposition \ref{n6}, Theorem \ref{n7} and Lemma \ref{Gruson}.
\end{proof}

\begin{Coro}
If $(R, \fm)$ is a local ring and $\fa$ be an ideal of $R$ such that $\dim R/\fa = 1$, then the following conditions are equivalent:
\begin{enumerate}
\item $H^i_\fa(M)$ is $\fa$-cominimax for all $i$;
\item $\mu_i (\fp,M)$ is finite for all $\fp\in V(\fa)$ and for all $i\leq \dim M$;
\item $\mu_i (\fp,M)$ is finite for all $\fp\in V(\fa)$ and for all $i$.
\end{enumerate}
\end{Coro}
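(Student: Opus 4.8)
The plan is to prove the cycle of implications (1) $\Rightarrow$ (3) $\Rightarrow$ (2) $\Rightarrow$ (1), using Corollary \ref{cf} as the bridge between $\fa$-cominimaxness of the modules $H^i_\fa(M)$ and minimaxness of $\Ext$-modules. A preliminary point I would record first: since $R$ is local and $\dim R/\fa=1$, every $\fp\in V(\fa)$ with $\fp\neq\fm$ satisfies $\fa\subseteq\fp\subsetneq\fm$, hence $\dim R/\fp=1$ and $R/\fp$ is a one-dimensional local domain; moreover, by \cite[Theorem 6.4]{Matsu}, $R/\fa$ has a filtration whose successive quotients are of the form $R/\fp$ with $\fp\in V(\fa)$.

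For (1) $\Rightarrow$ (3): fix $\fp\in V(\fa)$. Since localization sends minimax modules to minimax modules, $H^i_{\fa R_\fp}(M_\fp)=H^i_\fa(M)_\fp$ is $\fa R_\fp$-cominimax for every $i$; as $\dim R_\fp/\fa R_\fp\leq 1$, applying Corollary \ref{cf} over the ring $R_\fp$ to the finitely generated module $k(\fp)$, whose support lies in $V(\fa R_\fp)$, shows that $\Ext^i_{R_\fp}(k(\fp),M_\fp)$ is minimax over $R_\fp$ for all $i$. Being a $k(\fp)$-vector space, it then has finite length by Remark \ref{pro}(v), so $\mu^i(\fp,M)<\infty$ for all $i$. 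The implication (3) $\Rightarrow$ (2) is immediate.

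The heart of the argument is (2) $\Rightarrow$ (1). By Corollary \ref{cf} it suffices to show that $\Ext^i_R(R/\fa,M)$ is minimax for all $i\leq\dim M$, and since minimax modules form a Serre subcategory, the prime filtration of $R/\fa$ and the long exact sequences of $\Ext_R(-,M)$ reduce this to showing that $\Ext^i_R(R/\fp,M)$ is minimax for each $\fp\in V(\fa)$ and all $i\leq\dim M$. If $\fp=\fm$, then $\Ext^i_R(R/\fm,M)$ is a $k$-vector space of dimension $\mu^i(\fm,M)<\infty$, hence finitely generated. If $\fp\neq\fm$, set $N_i:=\Ext^i_R(R/\fp,M)$ and argue by induction on $i$. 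Localizing, $(N_i)_\fp=\Ext^i_{R_\fp}(k(\fp),M_\fp)$ is a $k(\fp)$-space of finite dimension $\mu^i(\fp,M)$; since $\fp$ is the only non-maximal point of $\Supp N_i$, one can choose a finitely generated $F\subseteq N_i$ with $N_i/F$ supported only at $\fm$, and then $N_i$ is minimax provided $N_i/F$ is Artinian, which by \cite[Theorem 1.3]{Mel2} holds once $\Hom_R(R/\fm,N_i/F)$ is finite dimensional; as $F$ is finitely generated, the $\Hom_R(R/\fm,-)$ sequence reduces this to finite dimensionality of $\Hom_R(R/\fm,N_i)$. For $i=0$ this follows from the adjunction isomorphism $\Hom_R(R/\fm,\Hom_R(R/\fp,M))\cong\Hom_R(R/\fm,M)$, which has dimension $\mu^0(\fm,M)$. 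For $i\geq1$ I would invoke the Cartan--Eilenberg change-of-rings spectral sequence $\Ext^p_{R/\fp}(R/\fm,\Ext^q_R(R/\fp,M))\Rightarrow\Ext^{p+q}_R(R/\fm,M)$: its edge map embeds $E_\infty^{0,i}$ into $\Ext^i_R(R/\fm,M)$, which is finite dimensional since $\mu^i(\fm,M)<\infty$, while each quotient $E_r^{0,i}/E_{r+1}^{0,i}$ injects into a subquotient of $\Ext^r_{R/\fp}(R/\fm,\Ext^{i-r+1}_R(R/\fp,M))$ with $i-r+1<i$; by the induction hypothesis $\Ext^{i-r+1}_R(R/\fp,M)$ is minimax, so by Lemma \ref{pro1} over $R/\fp$ together with Remark \ref{pro}(v) this $\Ext$ is a finite-dimensional $k$-space. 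Hence $E_2^{0,i}=\Hom_R(R/\fm,N_i)$ is finite dimensional and the induction closes. With $\Ext^i_R(R/\fa,M)$ now known to be minimax for all $i\leq\dim M$, Corollary \ref{cf} returns the $\fa$-cominimaxness of all $H^i_\fa(M)$.

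I expect the real obstacle to be the finite dimensionality of $\Hom_R(R/\fm,\Ext^i_R(R/\fp,M))$ for the one-dimensional primes $\fp\supseteq\fa$, i.e.\ converting finiteness of the Bass numbers at $\fp$ and at $\fm$ into minimaxness of $\Ext^i_R(R/\fa,M)$. Localizing at $\fp$ only bounds the ``generic size'' of $\Ext^i_R(R/\fp,M)$; controlling its $\fm$-torsion part seems to require either the spectral-sequence bookkeeping sketched above or an equivalent structural fact about modules over the one-dimensional local ring $R/\fp$, whereas the remaining steps are routine applications of the Serre property, localization, and Melkersson's Artinianness criterion.
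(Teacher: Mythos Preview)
Your argument is correct, with one harmless slip: in a first-quadrant cohomological spectral sequence the term $E_\infty^{0,i}$ is a \emph{quotient} of the abutment $\Ext^i_R(R/\fm,M)$ rather than a subobject, but this still yields the finite-dimensionality you need.

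The route, however, is quite different from the paper's. The paper disposes of the corollary in one line by invoking \cite[Theorem~2.3]{BNSBass} together with Corollary~\ref{cf}; the cited result of Bahmanpour--Naghipour--Sedghi already supplies the passage from finiteness of the Bass numbers $\mu^i(\fp,M)$ for $\fp\in V(\fa)$ to minimaxness of $\Ext^i_R(R/\fa,M)$, which is exactly the implication $(2)\Rightarrow(1)$ you work hardest to establish. Your approach is instead self-contained relative to the present paper: after the prime-filtration reduction you control $\Ext^i_R(R/\fp,M)$ for one-dimensional primes $\fp$ by combining the localized Bass number at $\fp$ (to produce a finitely generated submodule with $\fm$-supported quotient), Melkersson's Artinianness criterion \cite[Theorem~1.3]{Mel2}, and an inductive analysis of the change-of-rings spectral sequence $\Ext^p_{R/\fp}(R/\fm,\Ext^q_R(R/\fp,M))\Rightarrow\Ext^{p+q}_R(R/\fm,M)$ to bound $\Hom_R(R/\fm,\Ext^i_R(R/\fp,M))$. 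This buys independence from the external reference at the cost of a substantially longer argument; the paper's version is terser but opaque without \cite{BNSBass} at hand.
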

\begin{proof}
It yields from \cite[Theorem~2.3]{BNSBass} and Corollary \ref{cf}.
\end{proof}


\begin{thebibliography}{10}

\bibitem{AbaB2015}
N.~Abazari and K.~Bahmanpour, \emph{Extension functors of local cohomology
  modules and serre categories of modules}, Taiwanese J. Math. \textbf{19}
  (2015), no.~1, 211--220.

\bibitem{AR}
A.~Abbasi and H.~Roshan~Shekalgourabi, \emph{Serre subcategory properties of
  generalized local cohomology modules}, Korean Ann. Math. \textbf{28} (2011),
  no.~1, 25--37.

\bibitem{ARH}
A.~Abbasi, H.~Roshan~Shekalgourabi, and D.~Hassanzadeh-Lelekaami, \emph{Some
  results on the local cohomology of minimax modules}, Czechoslovak Math. J.
  \textbf{64} (2014), no.~139, 327--333.

\bibitem{ANV}
J.~Azami, R.~Naghipour, and B.~Vakili, \emph{Finiteness properties of local
  cohomology modules for $\fa$-minimax modules}, Proc. Amer. Math. Soc.
  \textbf{137} (2009), no.~2, 439--448.

\bibitem{BN08}
K.~Bahmanpour and R.~Naghipour, \emph{On the cofiniteness of local cohomology
  modules}, Proc. Amer. Math. Soc. \textbf{136} (2008), no.~7, 2359--2363.

\bibitem{BN}
\bysame, \emph{Cofiniteness of local cohomology modules for ideals of small
  dimension}, J. Algebra \textbf{321} (2009), 1997--2011.

\bibitem{BNSBass}
K.~Bahmanpour, R.~Naghipour, and M.~Sedghi, \emph{On the finiteness of bass
  numbers of local cohomology modules and cominimaxness}, Houston J. Math.
  \textbf{40} (2014), no.~2, 319--337.

\bibitem{BNS2015}
\bysame, \emph{Cofiniteness with respect to ideals of small dimensions},
  Algebr. Represent. Theory \textbf{18} (2015), no.~2, 369--379.

\bibitem{BS}
M.~P. Brodmann and R.~Y. Sharp, \emph{Local cohomology: An algebraic
  introduction with geometric applications}, Cambridge Studies in Advanced
  Mathematics 60, Cambridge University Press, Cambridge, 1998.

\bibitem{Chir}
G.~Chiriacescu, \emph{Cofiniteness of local cohomology modules}, Bull. London
  Math. Soc. \textbf{32} (2000), 1--7.

\bibitem{DM}
D.~Delfino and T.~Marley, \emph{Cofinite modules and local cohomology}, J. Pure
  Appl. Algebra \textbf{121} (1997), no.~1, 45--52.

\bibitem{Gro69}
A.~Grothendieck, \emph{Cohomologie locale des faisceaux et theoremes de
  lefshetz locaux et globaux \text{(SGA2)}}, 1968.

\bibitem{Ha}
R.~Hartshorne, \emph{Affine duality and cofiniteness}, Invent. Math. \textbf{9}
  (1969/1970), 145--164.

\bibitem{Irani}
Y.~Irani, \emph{Cominimaxness with respect to ideals of dimension one}, Bull.
  Korean Math. Soc. \textbf{54} (2017), no.~1, 289--298.

\bibitem{M2011}
A.~Mafi, \emph{On the local cohomology of minimax modules}, Bull. Korean Math.
  Soc. \textbf{48} (2011), no.~6, 1125--1128.

\bibitem{Matsu}
H.~Matsumura, \emph{Commutative ring theory}, Cambridge University Press,
  Cambridge, UK, 1986.

\bibitem{Mel2}
L.~Melkersson, \emph{On asymptotic stability for sets of prime ideals connected
  with the powers of an ideal}, Math. Proc. Camb. Phil. Soc. \textbf{107}
  (1990), 267--271.

\bibitem{Mel1999}
\bysame, \emph{Properties of cofinite modules and application to local
  cohomology}, Math. Proc. Cambridge Philos. Soc. \textbf{125} (1999),
  417--423.

\bibitem{Mel}
\bysame, \emph{Modules cofinite with respect to an ideal}, J. Algebra
  \textbf{285} (2005), 649--668.

\bibitem{Y}
K.~I. Yoshida, \emph{Cofiniteness of local cohomology modules for ideals of
  dimension one}, Nagoya Math. J. \textbf{147} (1997), 179--191.

\bibitem{Zo}
H.~Z\"{o}schinger, \emph{Minimax-moduln}, J. Algebra \textbf{102} (1986),
  1--32.

\end{thebibliography}

\providecommand{\bysame}{\leavevmode\hbox to3em{\hrulefill}\thinspace}
\providecommand{\MR}{\relax\ifhmode\unskip\space\fi MR }
\providecommand{\MRhref}[2]{%
  \href{http://www.ams.org/mathscinet-getitem?mr=#1}{#2}
}
\providecommand{\href}[2]{#2}

\end{document}